\begin{document}

\title{Summation of Hyperharmonic Series}
\author{Istv\'an Mez\H{o}}
\address{Institute of Mathematics, University of Debrecen, Hungary}
\email{imezo@math.klte.hu}
\urladdr{http://www.math.klte.hu/algebra/mezo.htm}
\keywords{hyperharmonic numbers, Euler sums, Riemann zeta function, Hypergeometric series}
\subjclass[2000]{11B83}

\newcommand{\NN}{\mathbb{N}}
\newcommand{\ZZ}{\mathbb{Z}}
\newcommand{\QQ}{\mathbb{Q}}

\newcommand {\Li}{\mathop{\textup{Li}}\nolimits}
\newcommand{\stirling}[2]{\genfrac[]{0pt}{}{#1}{#2}}

\newtheorem{Theorem}{Theorem}
\newtheorem{Lemma}[Theorem]{Lemma}
\newtheorem{Corollary}[Theorem]{Corollary}
\theoremstyle{definition}
\newtheorem{Definition}[Theorem]{Definition}
\newtheorem{Example}[Theorem]{Example}

\begin{abstract}
We shall show that the sum of the series formed by the so-called hyperharmonic numbers can be expressed in terms of the Riemann zeta function. More exactly, we give summation formula for the general hyperharmonic series.
\end{abstract}

\maketitle

\section{Hyperharmonic numbers}

\textbf{Introduction.} In 1996, J. H. Conway and R. K. Guy in \cite{CG} have defined the notion of hyperharmonic numbers.

The $n$-th harmonic number is the $n$-th partial sum of the harmonic series:
\[H_n=\sum_{k=1}^n\frac{1}{k}.\]

$H_n^{(1)}:=H_n$, and for all $r>1$ let
\[H_n^{(r)}=\sum_{k=1}^n H_k^{(r-1)}\]
be the $n$-th hyperharmonic number of order $r$.
These numbers can be expressed by binomial coefficients and ordinary harmonic numbers:
\begin{eqnarray}
H_n^{(r)}=\binom{n+r-1}{r-1}(H_{n+r-1}-H_{r-1}).\label{Hnr_binom}
\end{eqnarray}

It turned out that the hyperharmonic numbers have many combinatorial connections. To
present this fact, we need to introduce the notion of $r$-Stirling numbers. Getting deeper insight, see \cite{BGG} and the references given there.

\textbf{$r$-Stirling numbers.} $\stirling{n}{k}_r$ is the number of permutations of the set $\{1,\dots,n\}$ having $k$ disjoint,
non-empty cycles, in which the elements $1$ through $r$ are restricted to appear in different cycles.

The following identity integrates the hyperharmonic- and the $r$-Stirling numbers.
\[\frac{\stirling{n+r}{r+1}_r}{n!}=H_n^{(r)}.\]
This equality will be used in the special case $r=1$ \cite{GKP}:
\begin{equation}
\frac{\stirling{n+1}{2}}{n!}:=\frac{\stirling{n+1}{2}_r}{n!}=H_n.\label{stirling_Hn}
\end{equation}

\section{Results up to the present}

Our goal is to determine the sum of the series has the form
\[\sum_{n=1}^\infty\frac{H_n^{(r)}}{n^m},\]
for all $r\ge 2$ and possible $m$.

To do this, we should determine the asymptotic behaviour of hyperharmonic numbers. In the paper \cite{M} there is a skimped approximation which helps us to get convergence theorems for hyperharmonic series. Namely, we have that
\[\frac{1}{r!}n^{r-1}<H_n^{(r)}<\frac{3}{2}\frac{(2r)^r}{(r-1)!}n^r,\]
for all $n\in\NN$ and $r\ge 2$.
According to this result, the followings were proved:
\begin{align*}
&\sum_{n=1}^\infty\frac{H_n^{(r)}}{n^r}=\infty,\\
\frac{\zeta(s+1)}{r!}<&\sum_{n=1}^\infty\frac{H_n^{(r)}}{n^{r+s}}<\frac{3}{2}\frac{(2r)^r}{(r-1)!}\zeta(s)\quad(s>1),\\
&\sum_{n=1}^\infty\frac{n^{r-1}}{H_n^{(r)}}=\infty,\\
\frac{2}{3}\frac{(r-1)!}{(2r)^r}\zeta(s)<&\sum_{n=1}^\infty\frac{n^{r-s}}{H_n^{(r)}}<r!\zeta(s-1)\quad(s>2),
\end{align*}
where $\zeta$ is the Riemann zeta function and $r\ge 2$.

\section{Asymptotic approximation}

To have the exact asymptotic behaviour of hyperharmonic numbers we need the following inequality from \cite{CG}.
\begin{equation}
\frac{1}{2(n+1)}+\ln(n)+\gamma<H_n<\frac{1}{2n}+\ln(n)+\gamma\quad(n\in\NN),\label{Hn_estim}
\end{equation}
where $\gamma=0.5772\dots$ is the Euler-Mascheroni constant.

Now we formulate our first result.
\begin{Lemma}For all $n\in\NN$ and for a fixed order $r\ge 2$ we have
\[H_n^{(r)}\sim\frac{1}{(r-1)!}\left(n^{r-1}\ln(n)\right),\]
that is, the quotient of the left and right hand side tends to 1.
\end{Lemma}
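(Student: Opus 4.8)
The plan is to start from the closed form \eqref{Hnr_binom}, which writes $H_n^{(r)}$ as the product of the binomial coefficient $\binom{n+r-1}{r-1}$ and the harmonic difference $H_{n+r-1}-H_{r-1}$, and to pin down the asymptotics of each factor separately. For the first factor, observe that since $r$ is held fixed,
\[\binom{n+r-1}{r-1}=\frac{(n+1)(n+2)\cdots(n+r-1)}{(r-1)!}\]
is a polynomial in $n$ of degree $r-1$ with leading coefficient $\frac{1}{(r-1)!}$; hence the quotient of $\binom{n+r-1}{r-1}$ and $\frac{1}{(r-1)!}n^{r-1}$ tends to $1$.

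For the second factor I would invoke the estimate \eqref{Hn_estim}, which yields $H_{n+r-1}=\ln(n+r-1)+\gamma+O(1/n)$. Since $\ln(n+r-1)-\ln(n)=\ln\!\bigl(1+\tfrac{r-1}{n}\bigr)\to 0$, this gives
\[H_{n+r-1}-H_{r-1}=\ln(n)+\bigl(\gamma-H_{r-1}\bigr)+o(1),\]
and because $\ln(n)\to\infty$ while $\gamma-H_{r-1}$ is a fixed constant, the quotient of $H_{n+r-1}-H_{r-1}$ and $\ln(n)$ also tends to $1$. Multiplying the two limits produces $H_n^{(r)}\big/\bigl(\tfrac{1}{(r-1)!}n^{r-1}\ln(n)\bigr)\to 1$, which is exactly the assertion of the lemma.

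There is no genuine obstacle here: once \eqref{Hnr_binom} and \eqref{Hn_estim} are available, the argument is essentially bookkeeping. The one point that deserves a little care is checking that the additive constant $\gamma-H_{r-1}$ and the lower-degree terms of the polynomial $\binom{n+r-1}{r-1}$ are truly negligible against the dominant quantities $\ln(n)$ and $\frac{1}{(r-1)!}n^{r-1}$ respectively — and this is precisely where the hypothesis that $r$ is fixed enters, since then these corrections do not grow with $n$. Alternatively, to keep the write-up fully elementary one can avoid the $o(1)$ notation and simply substitute the left and right bounds of \eqref{Hn_estim} into \eqref{Hnr_binom} to sandwich $H_n^{(r)}$ between two explicit expressions, both asymptotic to $\frac{1}{(r-1)!}n^{r-1}\ln(n)$, and conclude by the squeeze theorem; this is probably the cleanest presentation.
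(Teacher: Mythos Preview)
Your argument is correct and is essentially the same as the paper's: both start from \eqref{Hnr_binom}, observe that $\binom{n+r-1}{r-1}\sim\frac{1}{(r-1)!}n^{r-1}$, and use \eqref{Hn_estim} to show $H_{n+r-1}-H_{r-1}\sim\ln(n)$. The paper carries out precisely the explicit sandwich version you mention at the end, obtaining $\ln(n+t)-\ln(t\sqrt{e})<H_{n+t}-H_t<\ln(n+t)$ and then squeezing, so your proposal matches the paper's proof in both structure and detail.
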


\begin{proof}The binomial coefficient in \eqref{Hnr_binom} has the form
\[\binom{n+r-1}{r-1}=\frac{(n+r-1)!}{(r-1)!n!}=\frac{1}{(r-1)!}(n+1)(n+2)\cdots(n+r-1).\]
It means that for a fixed order $r$
\[\binom{n+r-1}{r-1}\sim\frac{1}{(r-1)!}n^{r-1}.\]
For the convenience let us introduce the abbreviation $t:=r-1$. We should estimate the factor $H_{n+t}-H_t$ in \eqref{Hnr_binom}. According to \eqref{Hn_estim}, we get that
\[H_{n+t}-H_t<\frac{1}{2(n+t)}+\ln(n+t)+\gamma-\frac{1}{2(t+1)}-\ln(t)-\gamma.\]
Since $\frac{1}{2(n+t)}<\frac{1}{2(1+t)}$ and $\ln(n+t)-\ln(t)=\ln\left(\frac{n+t}{t}\right)$, so
\[H_{n+t}-H_t<\ln(n+t).\]
The lower estimation can be deduced as follows
\[H_{n+t}-H_t>\frac{1}{2(n+t+1)}+\ln(n+t)+\gamma-\frac{1}{2t}-\ln(t)-\gamma>\]
\[>\ln(n+t)-\ln(t)-\frac{1}{2}=\ln(n+t)-\ln(t\sqrt{e}).\]
From these we get that
\[\ln(n+t)-\ln(t\sqrt{e})<H_{n+t}-H_t<\ln(n+t),\]
whence
\[1-\frac{\ln(t\sqrt{e})}{\ln(n+t)}<\frac{H_{n+t}-H_t}{\ln(n+t)}<1.\]
The limit of the left-hand side formula is $1$ as $n$ tends to infinity. Therefore (remember that $t=r-1$)
\[H_{n+r-1}-H_{r-1}\sim\ln(n+r-1)\sim\ln(n).\]
Collecting the results above we get the statement of the Lemma.
\end{proof}

\begin{Corollary}The following series are convergent
\[\sum_{n=1}^\infty\frac{H_n^{(r)}}{n^m}<+\infty,\]
whenever $m>r$.
\end{Corollary}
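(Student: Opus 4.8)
The plan is to bound the summand using the asymptotic estimate of the Lemma and then compare with a convergent zeta-type series. By the Lemma, $H_n^{(r)}\big/\bigl(\tfrac{1}{(r-1)!}n^{r-1}\ln n\bigr)\to 1$, so the left-hand side is eventually at most, say, $\tfrac{2}{(r-1)!}n^{r-1}\ln n$; since finitely many initial terms do not affect convergence, there is a constant $C=C(r)$ with $H_n^{(r)}\le C\,n^{r-1}\ln n$ for every $n\in\NN$. Hence
\[\frac{H_n^{(r)}}{n^m}\le C\,\frac{\ln n}{n^{m-r+1}}.\]
Write $s:=m-r+1$; the hypothesis $m>r$ gives $s>1$.

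It remains to check that $\sum_{n=1}^\infty \frac{\ln n}{n^s}$ converges for $s>1$. For this I would pick $\varepsilon:=\tfrac{s-1}{2}>0$ and use the elementary fact that $\ln n = o(n^{\varepsilon})$, so that $\ln n\le C'\,n^{\varepsilon}$ for all $n$ with a suitable $C'$; then
\[\frac{\ln n}{n^s}\le C'\,\frac{1}{n^{s-\varepsilon}}=C'\,\frac{1}{n^{(s+1)/2}},\]
and $(s+1)/2>1$, so the series is dominated term-by-term by $C'\zeta\bigl((s+1)/2\bigr)<\infty$. (Alternatively one could invoke the Cauchy condensation test directly on $\sum \ln n / n^s$.) Combining the two displayed bounds yields $\sum_{n=1}^\infty \frac{H_n^{(r)}}{n^m}\le CC'\,\zeta\bigl(\tfrac{m-r+2}{2}\bigr)<+\infty$, which is the assertion.

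There is no genuine obstacle here; the only point requiring a little care is that the Lemma provides an \emph{asymptotic} equivalence, not a bound valid for all $n$, so one must explicitly pass from "eventually bounded" to "bounded for all $n$" by absorbing the finitely many exceptional terms into the constant. I would also note in passing that the cruder estimate $H_n^{(r)}<\tfrac{3}{2}\tfrac{(2r)^r}{(r-1)!}n^r$ quoted earlier would only give convergence for $m>r+1$, so using the sharper Lemma is what yields the stated range $m>r$.
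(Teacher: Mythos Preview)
Your argument is correct and follows essentially the same route as the paper: bound $H_n^{(r)}$ by $C\,n^{r-1}\ln n$ via the Lemma, then dominate the series by $\sum \ln n/n^{m-r+1}$, which converges since $m-r+1\ge 2$. Your write-up is simply more careful than the paper's---you spell out the passage from asymptotic equivalence to a global bound and actually prove the convergence of $\sum \ln n/n^{s}$, whereas the paper compresses all of this into a single chain of inequalities ending at $\sum \ln n/n^2<+\infty$.
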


\begin{proof}Because of Lemma 1,
\[\sum_{n=1}^\infty\frac{H_n^{(r)}}{n^m}<C\sum_{n=1}^\infty\frac{n^{r-1}\ln(n)}{n^m}<C\sum_{n=1}^\infty\frac{\ln(n)}{n^2}<+\infty\]
under the assumption $m\ge r+1$.
\end{proof}

\section{Generating functions, Euler sums and Hypergeometric Series}
In this section we introduce the notions needed in the proof.

\textbf{Generating functions.} Let $(a_n)_{n\in\NN}$ be a real sequence. Then the function
\[f(z):=\sum_{n=0}^\infty a_nz^n\]
is called the generating function of $(a_n)_{n\in\NN}$.
If $a_n=H_n$ we get that (see \cite{GKP,BGG})
\[\sum_{n=0}^\infty H_nz^n=-\frac{\ln(1-z)}{1-z},\]
and in general
\begin{equation}
\sum_{n=0}^\infty H_n^{(r)}z^n=-\frac{\ln(1-z)}{(1-z)^r}.\label{genfunc_Hnr}
\end{equation}
The generating function
\begin{equation}
\frac{1}{m!}\left(-\ln(1-z)\right)^m=\sum_{n=1}^\infty\stirling{n}{m}\frac{z^n}{n!}\label{genfunc_stirling}
\end{equation}
can be found in \cite{GKP,B}.

The well known polylogarithm functions can also be considered as generating functions belong to $a_n=\frac{1}{n^k}$ (for a fixed $k$).
\[\Li_k(z)=\sum_{n=1}^\infty\frac{z^n}{n^k}\quad(k=1,2,\dots).\]

The last remarkable function needed by us is
\[\frac{1}{(1-z)^k}=\sum_{n=0}^\infty\binom{n+k-1}{n}z^n.\]

\textbf{Euler sums.} The general Euler sum is an infinite sum whose general term is a product of harmonic numbers divided by some power of $n$, see the comprehensive paper \cite{FS}. The sum
\[\sum_{n=1}^\infty\frac{H_n}{n^m}=\frac{1}{2}(m+2)\zeta(m+1)-\sum_{k=1}^{m-2}\zeta(m-k)\zeta(k+1)\]
was derived by Euler (see \cite{BB} and the references given there). Related series were studied by De Doelder in \cite{dD} and Shen \cite{S}, for instance.

\textbf{Hypergeometric series.} The Pochhammer symbol is defined by the formula
\begin{equation}
(x)_n=x(x+1)\cdots(x+n-1),\label{pochhammer}
\end{equation}
with special cases $(1)_n=n!$ and $(x)_1=x$. The definition of the hypergeometric function (or hypergeometric series) is the following:
\[\leftidx{_n}{F}{_m}\left(\left.\begin{tabular}{llll}$a_1,$&$a_2,$&$\dots,$&$a_n$\\$b_1,$&$b_2$,&$\dots,$&$b_m$\end{tabular}\right|z\right)=\sum_{k=0}^\infty\frac{(a_1)_k(a_2)_k\cdots(a_n)_k}{(b_1)_k(b_2)_k\cdots(b_m)_k}\frac{z^k}{k!}.\]
This function will appear in the sum of the hyperharmonic numbers.

We shall need one more statement.
\begin{Lemma}We have
\[\int\frac{\ln(z)}{(1-z)z}dt=\Li_2(1-z)+\frac{1}{2}\ln^2(z),\]
and for all $2\le r\in\NN$
\[\int\frac{\ln(z)}{(1-z)z^r}dz=\int\frac{\ln(z)}{(1-z)z^{r-1}}dz-\frac{\ln(z)}{(r-1)z^{r-1}}-\frac{1}{(r-1)^2z^{r-1}},\]
or, equivalently,
\[\int\frac{\ln(z)}{(1-z)z^r}dz=\Li_2(1-z)+\frac{1}{2}\ln^2(z)-\sum_{k=1}^{r-1}\left(\frac{\ln(z)}{kz^k}+\frac{1}{k^2z^k}\right).\]
up to additive constants
\end{Lemma}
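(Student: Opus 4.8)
The plan is to verify the first (base) formula by straightforward differentiation, then to establish the recursion from a single partial-fraction identity together with one integration by parts, and finally to deduce the closed form by induction on $r$.

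First I would prove the formula for $\int\frac{\ln(z)}{(1-z)z}\,dz$ by differentiating the proposed right-hand side. From the series definition one has $\Li_2'(w)=-\frac{\ln(1-w)}{w}$, so by the chain rule $\frac{d}{dz}\Li_2(1-z)=\frac{\ln(z)}{1-z}$, while $\frac{d}{dz}\bigl(\frac{1}{2}\ln^2(z)\bigr)=\frac{\ln(z)}{z}$. Adding these and combining over the common denominator $z(1-z)$, and using $z+(1-z)=1$, gives exactly $\frac{\ln(z)}{(1-z)z}$; this proves the base case up to an additive constant.

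Next I would establish the recursion. The key observation is the telescoping partial-fraction identity
\[\frac{1}{(1-z)z^r}=\frac{1}{(1-z)z^{r-1}}+\frac{1}{z^r},\]
which is immediate since $z+(1-z)=1$. Multiplying by $\ln(z)$ and integrating reduces matters to evaluating $\int\frac{\ln(z)}{z^r}\,dz$, which one computes by a single integration by parts (take $u=\ln(z)$, $dv=z^{-r}\,dz$, so $v=-\frac{1}{(r-1)z^{r-1}}$) to be $-\frac{\ln(z)}{(r-1)z^{r-1}}-\frac{1}{(r-1)^2z^{r-1}}$. Substituting this back yields precisely the stated recursion.

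Finally, the closed form follows by induction on $r$. The case $r=1$ is the base formula already proved, with an empty sum. For the inductive step, apply the recursion and the induction hypothesis for $r-1$: the two correction terms $-\frac{\ln(z)}{(r-1)z^{r-1}}-\frac{1}{(r-1)^2z^{r-1}}$ are exactly the $k=r-1$ term of $\sum_{k=1}^{r-1}\bigl(\frac{\ln(z)}{kz^k}+\frac{1}{k^2z^k}\bigr)$, so the sum extends from $k=1,\dots,r-2$ to $k=1,\dots,r-1$, completing the induction. I do not expect a genuine obstacle here; the only points requiring a little care are spotting the partial-fraction identity and tracking the constants of integration, which is exactly why the statement is phrased "up to additive constants".
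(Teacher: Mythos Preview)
Your proof is correct. For the base formula you proceed exactly as the paper does: differentiate $\Li_2(1-z)+\tfrac12\ln^2(z)$ and combine over the common denominator $z(1-z)$ using $z+(1-z)=1$.

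For the recursion the paper takes a slightly different route: it simply differentiates the proposed right-hand side $\int\frac{\ln(z)}{(1-z)z^{r-1}}\,dz-\frac{\ln(z)}{(r-1)z^{r-1}}-\frac{1}{(r-1)^2z^{r-1}}$ and algebraically simplifies the result down to $\frac{\ln(z)}{(1-z)z^r}$. Your approach via the partial-fraction identity $\frac{1}{(1-z)z^r}=\frac{1}{(1-z)z^{r-1}}+\frac{1}{z^r}$ followed by one integration by parts is a genuine alternative: it is more constructive, in that it shows how the two correction terms arise rather than merely verifying them, and it makes the telescoping structure transparent. The paper's verification-by-differentiation is shorter to write but gives less insight. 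Your explicit induction for the closed form is also a small addition; the paper leaves that step implicit in the word ``equivalently.''
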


\begin{proof}
The definition of $\Li_2$ readily gives that
\[\Li_2'(1-z)=\frac{\ln(z)}{1-z}.\]
Moreover,
\[\left[\frac{1}{2}\ln^2(z)\right]'=\frac{\ln(z)}{z},\]
whence
\[\Li_2'(1-z)+\left[\frac{1}{2}\ln^2(z)\right]'=\frac{z\ln(z)+(1-z)\ln(z)}{(1-z)z}=\frac{\ln(z)}{(1-z)z}.\]
The first statement is proved. The second one also can be deduced by differentiation. The derivative of the right-hand side has the form
\[\frac{\ln(z)}{(1-z)z^{r-1}}-\frac{(r-1)z^{r-2}-(r-1)^2\ln(z)z^{r-2}}{(r-1)^2(z^{r-1})^2}-\frac{-(r-1)}{(r-1)^2z^r}=\]
\[=\frac{z\ln(z)}{(1-z)z^r}-\frac{z^r-(r-1)\ln(z)z^r}{(r-1)(z^r)^2}+\frac{1}{(r-1)z^r}=\]
\[=\frac{z^{r+1}\ln(z)(r-1)-z^r(1-z)+(1-z)(r-1)\ln(z)z^r+z^r(1-z)}{(r-1)z^{2r}(1-z)}=\]
\[=\frac{z\ln(z)+(1-z)\ln(z)}{z^r(1-z)}=\frac{\ln(z)}{z^r(1-z)},\]
as we want.
\end{proof}

\section{The summation formula}

For the sake of simplicity, we introduce the notations
\[S(r,m):=\sum_{n=1}^\infty\frac{H_n^{(r)}}{n^m},\]
and
\[B(k,m):=\leftidx{_{m+1}}{F}{_m}\left(\left.\begin{tabular}{lllll}$1,$&$1,$&$\dots,$&$1,$&$k+1$\\$2,$&$2$,&$\dots,$&$2$\end{tabular}\right|1\right).\]

After these introductory steps we are ready to prove the main theorem.

\begin{Theorem} If $r\ge 2$ and $m\ge r+1$, then
\[S(r,m)=S(1,m)+\sum_{k=1}^{r-1}\frac{1}{k}\left[S(k,m-1)-B(k,m)\right].\]
\end{Theorem}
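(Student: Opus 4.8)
The plan is to lower the order of the hyperharmonic numbers one unit at a time. Concretely, I would first establish the one-step recursion
\[S(r,m)=S(r-1,m)+\frac{1}{r-1}\bigl[S(r-1,m-1)-B(r-1,m)\bigr]\qquad(r\ge 2),\]
and then iterate it: apply it to $S(r,m)$, then to $S(r-1,m)$, and so on down to $S(2,m)$, until only $S(1,m)$ remains. The $r-1$ constant terms picked up along the way are exactly $\sum_{k=1}^{r-1}\frac1k\bigl[S(k,m-1)-B(k,m)\bigr]$, which is the claimed formula. The hypothesis $m\ge r+1$ ensures, through the Corollary together with the elementary remark that $\sum_n n^{-m}\binom{n+k-1}{k}<\infty$ whenever $m>k$, that every quantity $S(k,m)$, $S(k,m-1)$ and $B(k,m)$ with $1\le k\le r-1$ is a convergent series, so the termwise rearrangements below are legitimate.

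To prove the one-step recursion I would reduce the order inside \eqref{Hnr_binom}. Writing $\binom{n+r-1}{r-1}=\frac{n+r-1}{r-1}\binom{n+r-2}{r-2}$ and $H_{n+r-1}-H_{r-1}=(H_{n+r-2}-H_{r-2})+\frac{1}{n+r-1}-\frac{1}{r-1}$, multiplying these out, recognising $\binom{n+r-2}{r-2}(H_{n+r-2}-H_{r-2})=H_n^{(r-1)}$ by a second use of \eqref{Hnr_binom}, and collapsing the leftover binomial terms with Pascal's rule $\binom{n+r-1}{r-1}=\binom{n+r-2}{r-2}+\binom{n+r-2}{r-1}$, I arrive at the pointwise identity
\[H_n^{(r)}=\frac{n+r-1}{r-1}\,H_n^{(r-1)}-\frac{1}{r-1}\binom{n+r-2}{r-1}.\]
Dividing by $n^m$, summing over $n\ge1$, and using $\frac{n+r-1}{(r-1)n^m}=\frac{1}{(r-1)n^{m-1}}+\frac{1}{n^m}$ (the three resulting series all converge since $m\ge r+1$) yields
\[S(r,m)=S(r-1,m)+\frac{1}{r-1}S(r-1,m-1)-\frac{1}{r-1}\sum_{n=1}^\infty\frac{1}{n^m}\binom{n+r-2}{r-1}.\]

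The last step is to identify the remaining sum with $B(r-1,m)$. Expanding the defining series of $B(k,m)$ and using $(1)_j=j!$, $(2)_j=(j+1)!$ and $(k+1)_j/j!=\binom{k+j}{j}$, its general term is
\[\frac{(1)_j^{\,m}(k+1)_j}{(2)_j^{\,m}\,j!}=\frac{1}{(j+1)^m}\binom{k+j}{j},\]
so the substitution $n=j+1$ gives $B(k,m)=\sum_{n=1}^\infty n^{-m}\binom{n+k-1}{k}$; taking $k=r-1$ this is precisely $\sum_{n\ge1}n^{-m}\binom{n+r-2}{r-1}$. Inserting this into the previous display gives the one-step recursion, and iterating it down to order $1$ completes the proof. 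I expect the only subtle points to be the binomial-and-harmonic bookkeeping that produces the pointwise identity, and the clean recognition of the polynomial Dirichlet-type sum $\sum_n n^{-m}\binom{n+r-2}{r-1}$ as the value at $1$ of the hypergeometric series $B(r-1,m)$; everything else is termwise manipulation justified by the convergence already recorded in the Corollary.
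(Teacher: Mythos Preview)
Your proof is correct and follows a genuinely different path from the paper's. The paper proceeds analytically: it starts from the generating function \eqref{genfunc_Hnr}, divides by $z$, integrates, and invokes Lemma~3 to evaluate $-\int\frac{\ln(1-z)}{z(1-z)^r}\,dz$ as $\Li_2(z)+\frac12\ln^2(1-z)$ minus a finite sum; each summand is then recognised again as a generating function (for $H_n^{(k)}$ via \eqref{genfunc_Hnr} and for $\binom{n+k-1}{n}$), the divide-and-integrate step is repeated $m-1$ times, and finally $z=1$ is substituted. Your route is purely algebraic: you extract from \eqref{Hnr_binom} and Pascal's rule the pointwise identity $H_n^{(r)}=\frac{n+r-1}{r-1}H_n^{(r-1)}-\frac{1}{r-1}\binom{n+r-2}{r-1}$, sum it against $n^{-m}$, and telescope in $r$. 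This avoids the dilogarithm, Lemma~3, and the formal manipulation of divergent generating-function series that the paper explicitly flags (``this series is divergent but the generating functions works well''), so it is both more elementary and more self-contained. The paper's approach, in exchange, produces all $r-1$ terms simultaneously rather than one at a time, and the integral in Lemma~3 makes visible why the same $\Li_2$/Euler-sum structure underlies every order $r$.

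One small slip: the convergence criterion for $\sum_{n\ge1} n^{-m}\binom{n+k-1}{k}$ is $m\ge k+2$, not $m>k$, since $\binom{n+k-1}{k}\sim n^k/k!$. This does not damage the argument, because $k\le r-1$ together with the hypothesis $m\ge r+1$ still gives $m\ge k+2$.
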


\begin{proof}We begin with the generating function \eqref{genfunc_Hnr}. Division with $z$ and integration gives that
\[\sum_{n=1}^\infty \frac{H_n^{(r)}}{n}z^n=-\int\frac{\ln(1-z)}{z(1-z)^r}dz.\]
An integral transformation gives that
\[-\int\frac{\ln(1-z)}{z(1-z)^r}dz=\int\frac{\ln(z)}{(1-z)z^r}dz=\]
\[=\Li_2(z)+\frac{1}{2}\ln^2(1-z)-\sum_{k=1}^{r-1}\left(\frac{\ln(1-z)}{k(1-z)^k}+\frac{1}{k^2(1-z)^k}\right).\]
According to \eqref{genfunc_Hnr} and \eqref{genfunc_stirling} one can write
\[\sum_{n=1}^\infty \frac{H_n^{(r)}}{n}z^n=\Li_2(z)+\sum_{n=1}^\infty\stirling{n}{2}\frac{z^n}{n!}-\]
\[\sum_{k=1}^{r-1}\left(\frac{1}{k}(-1)\sum_{n=0}^\infty H_n^{(r)}z^n+\frac{1}{k^2}\sum_{n=0}^\infty\binom{n+k-1}{n}z^n\right).\]
It is obvious that this series is divergent but the generating functions works well without any restriction.

Let us deal with the second term. The Stirling numbers satisfy the recurrence relation
\[\stirling{n}{k}=(n-1)\stirling{n-1}{k}+\stirling{n-1}{k-1}\quad(n>0).\]
From this
\[\stirling{n+1}{2}=n\stirling{n}{2}+\stirling{n}{1}=n\stirling{n}{2}+(n-1)!\quad(n>0).\]
Now, \eqref{stirling_Hn} can be rewritten as follows
\[H_n=\frac{1}{n!}\stirling{n+1}{2}=\frac{1}{(n-1)!}\stirling{n}{2}+\frac{1}{n}.\]
Division with $n$ and rearrangement give that
\[\frac{1}{n!}\stirling{n}{2}=\frac{H_n}{n}-\frac{1}{n^2}.\]
Therefore the second sum is
\[\sum_{n=1}^\infty\stirling{n}{2}\frac{z^n}{n!}=\sum_{n=1}^\infty\frac{H_n}{n}z^n-\sum_{n=1}^\infty\frac{z^n}{n^2}.\]
Since the last member equals to $\Li_2(z)$, it cancels the first member of the sum above. Hence
\[\sum_{n=1}^\infty \frac{H_n^{(r)}}{n}z^n=\sum_{n=1}^\infty\frac{H_n}{n}z^n+\sum_{k=1}^{r-1}\left(\frac{1}{k}\sum_{n=0}^\infty H_n^{(r)}z^n-\frac{1}{k^2}\sum_{n=0}^\infty\binom{n+k-1}{n}z^n\right).\]
An easy induction shows that (after dividing with $z$, integrating, and repeating these steps $(m-1)$-times and finally substituting $z=1$)
\begin{equation}
\sum_{n=1}^\infty \frac{H_n^{(r)}}{n^m}=S(1,m)+\sum_{k=1}^{r-1}\left(\frac{1}{k}S(r,m-1)-\frac{1}{k^2}\sum_{n=1}^\infty\binom{n+k-1}{n}\frac{1}{n^{m-1}}\right).\label{prelim}
\end{equation}
The last step is the transformation of the last member.
\[\sum_{n=1}^\infty\binom{n+k-1}{n}\frac{1}{n^{m-1}}=\sum_{n=1}^\infty\frac{(n+k-1)!}{n!(k-1)!}\frac{1}{n^{m-1}}=\]
\[\frac{1}{(k-1)!}\sum_{n=1}^\infty(n+1)(n+2)\cdots(n+k-1)\frac{1}{n^{m-1}}=\]
\[\frac{1}{(k-1)!}\sum_{n=1}^\infty\frac{(n)_k}{n^m},\]
because of the definition of the Pochhammer symbol in formula \eqref{pochhammer}.
On the other hand, the definition of $B(k,m)$ yields that
\[B(k,m)=\sum_{n=0}^\infty\frac{(n!)^m}{(n+1)!^m}\frac{(k+1)_n}{n!}=\sum_{n=0}^\infty\frac{1}{(n+1)^m}\frac{(k+1)_n}{n!}.\]
The next conversion should be applied:
\[\frac{k!(k+1)_n}{n!}=\frac{(k+n)!}{n!}=(n+1)(n+2)\cdots(n+k)=(n+1)_k.\]
It means that the equality
\begin{equation}
B(k,m)=\frac{1}{k!}\sum_{n=0}^\infty\frac{(n+1)_k}{(n+1)^m}=\frac{1}{k!}\sum_{n=1}^\infty\frac{(n)_k}{n^m}\label{Bkm}
\end{equation}
holds. That is,
\[\sum_{n=1}^\infty\binom{n+k-1}{n}\frac{1}{n^{m-1}}=kB(k,m).\]
Considering this and \eqref{prelim} the result follows.
\end{proof}

\section{Tabular of the low-order sums}

In the following tabulars we collect the low-order results of the Summation Theorem. We used the following identities which can be easily derived from \eqref{Bkm} and \eqref{pochhammer}.
\begin{align*}
B(1,m)&=\zeta(m-1),\\
B(2,m)&=\frac{1}{2}\left(\zeta(m-1)+\zeta(m-2)\right),\\
B(3,m)&=\frac{1}{6}\zeta(m-3)+\frac{1}{2}\zeta(m-2)+\frac{1}{3}\zeta(m-1).
\end{align*}

A computation with the mathematical package Maple shows that an improved accuracy can be reached using Theorem 1, in spite of calculating the series term-by-term. Namely, the sum
\[\sum_{n=1}^{10^5}\frac{H_n^{(4)}}{n^5}=1.310972037,\]
and takes about 500 seconds on an average personal computer, while the "closed form" shown below gives immediately the closer value
\[\sum_{n=1}^\infty\frac{H_n^{(4)}}{n^5}=\frac{\pi^6}{540}-\frac{\pi^4}{810}-\frac{11\pi^2}{216}-\zeta(3)-\frac{11\pi^2}{36}\zeta(3)-\frac{1}{2}\zeta(3)^2+\frac{11}{2}\zeta(5)\]
\[\approx 1.310990854,\]
with ten digits accuracy.

\begin{center}
$S(2,m)$\\
\begin{longtable}{|l|p{60mm}|l|l|}
\hline Power of $n$&Closed form&Approx. value\\\hline
$m=3$&$\frac{\pi^4}{72}-\frac{\pi^2}{6}+2\zeta(3)$&2.112083781\\\hline
$m=4$&$\frac{\pi^4}{72}+3\zeta(5)-\zeta(3)\left(1+\frac{\pi^2}{6}\right)$&1.284326055\\\hline
$m=5$&$\frac{\pi^6}{540}-\frac{\pi^4}{90}-\frac{1}{2}\zeta(3)^2+3\zeta(5)-\frac{\pi^2}{6}\zeta(3)$&1.109035642\\\hline
$m=6$&$\frac{\pi^6}{540}+4\zeta(7)-\frac{\pi^4}{90}\zeta(3)-\frac{1}{2}\zeta(3)^2-\zeta(5)\left(1+\frac{\pi^2}{6}\right)$&1.047657410\\\hline
$m=7$&$\frac{\pi^8}{4200}-\frac{\pi^6}{945}-\zeta(5)\zeta(3)+4\zeta(7)-\frac{\pi^2}{6}\zeta(5)-\frac{\pi^4}{90}\zeta(3)$&1.022090029\\\hline
$m=8$&$\frac{\pi^8}{4200}+5\zeta(9)-\frac{\pi^6}{945}\zeta(3)-\frac{\pi^4}{90}\zeta(5)-\zeta(5)\zeta(3)-\zeta(7)\left(1+\frac{\pi^2}{6}\right)$&1.010557246\\\hline
$m=9$&$\frac{\pi^{10}}{34020}-\frac{\pi^8}{9450}-\zeta(7)\zeta(3)-\frac{1}{2}\zeta(5)^2+5\zeta(9)-\frac{\pi^2}{6}\zeta(7)-\frac{\pi^6}{945}\zeta(3)-\frac{\pi^4}{90}\zeta(5)$&1.005133570\\\hline
$m=10$&$\frac{\pi^{10}}{34020}+6\zeta(11)-\frac{\pi^8}{9450}\zeta(3)-\frac{\pi^6}{945}\zeta(5)-\frac{1}{2}\zeta(5)^2-\frac{\pi^4}{90}\zeta(7)-\zeta(7)\zeta(3)\quad-\zeta(9)\left(1+\frac{\pi^2}{6}\right)$&1.002522063\\\hline
\end{longtable}
\end{center}

\vspace{5mm}
\begin{center}
$S(3,m)$\\
\begin{longtable}{|l|p{60mm}|l|l|}
\hline Power of $n$&Closed form&Approx. value\\\hline
$m=4$&$\frac{\pi^4}{48}-\frac{\pi^2}{8}-\frac{\pi^2}{6}\zeta(3)-\frac{1}{4}\zeta(3)+3\zeta(5)$&1.628620203\\\hline
$m=5$&$\frac{\pi^6}{540}-\frac{\pi^4}{144}-\frac{\pi^2}{4}\zeta(3)-\frac{3}{4}\zeta(3)-\frac{1}{2}\zeta(3)^2+\frac{9}{2}\zeta(5)$&1.180103635\\\hline
$m=6$&$\frac{\pi^6}{360}-\frac{\pi^4}{120}+4\zeta(7)-\frac{\pi^2}{6}\zeta(5)-\frac{\pi^4}{90}\zeta(3)-\frac{3}{4}\zeta(3)^2+\frac{1}{4}\zeta(5)-\frac{\pi^2}{12}\zeta(3)$&1.072362484\\\hline
$m=7$&$\frac{\pi^8}{4200}-\frac{\pi^6}{2520}-\frac{\pi^4}{60}\zeta(3)-\frac{1}{4}\zeta(3)^2-\zeta(5)\zeta(3)-\zeta(5)\left(\frac{\pi^2}{4}+\frac{3}{4}\right)+6\zeta(7)$&1.032351029\\\hline
$m=8$&$\frac{\pi^8}{2800}-\frac{\pi^6}{1260}-\zeta(3)\left(\frac{\pi^4}{180}+\frac{\pi^6}{945}\right)-\zeta(5)\left(\frac{\pi^2}{12}+\frac{\pi^4}{90}\right)-\frac{3}{2}\zeta(5)\zeta(3)\quad+\zeta(7)\left(\frac{3}{4}-\frac{\pi^2}{6}\right)+5\zeta(9)$&1.015179175\\\hline
\end{longtable}
\end{center}

\vspace{5mm}
\begin{center}
$S(4,m)$\\
\begin{longtable}{|l|p{60mm}|l|l|}
\hline Power of $n$&Closed form&Approx. value\\\hline
$m=5$&$\frac{\pi^6}{540}-\frac{\pi^4}{810}-\frac{11\pi^2}{216}-\zeta(3)-\frac{11\pi^2}{36}\zeta(3)-\frac{1}{2}\zeta(3)^2+\frac{11}{2}\zeta(5)$&1.310990854\\\hline
$m=6$&$\frac{11\pi^6}{3240}-\frac{\pi^4}{80}-\zeta(3)\left(\frac{\pi^4}{90}+\frac{1\pi^2}{6}+\frac{11}{36}\right)++\frac{11}{12}\zeta(3)^2+\zeta(5)\left(\frac{59}{36}-\frac{\pi^2}{6}\right)+4\zeta(7)$&1.103348021\\\hline
$m=7$&$\frac{\pi^8}{4200}-\frac{11\pi^4}{3240}+\frac{\pi^6}{2430}\quad-\frac{1}{2}\zeta(3)^2-\zeta(3)\left(\frac{11\pi^4}{540}-\frac{\pi^2}{36}\right)\quad-\zeta(3)\zeta(5)-\zeta(5)\left(\frac{5}{6}-\frac{11\pi^2}{36}\right)+\frac{22}{3}\zeta(7)$&1.043816710\\\hline
$m=8$&$\frac{11\pi^8}{25200}-\frac{5\pi^6}{4536}-\zeta(3)\left(\frac{\pi^6}{945}+\frac{\pi^4}{90}\right)-\frac{1}{12}\zeta(3)^2-\frac{11}{6}\zeta(3)\zeta(5)\quad\quad\quad\quad-\zeta(5)\left(\frac{\pi^4}{90}+\frac{\pi^2}{6}+\frac{11}{36}\right)\quad\quad\quad\quad\quad+\zeta(7)\left(\frac{95}{36}-\frac{\pi^2}{6}\right)+5\zeta(9)$&1.020093103\\\hline
\end{longtable}
\end{center}

\vspace{1cm}

\end{document}